\documentclass[12pt]{article}
\usepackage{graphics,amsmath,amssymb}
\usepackage{amsthm}
\usepackage{amsfonts}
\usepackage{latexsym}
\usepackage{amscd}
\usepackage{atbegshi}
\AtBeginDocument{\AtBeginShipoutNext{\AtBeginShipoutDiscard}}

\newtheorem{theorem}{Theorem}[section]
\newtheorem{corollary}{Corollary}[section]

\newtheorem{conjecture}{Conjecture}[section]

\begin{document}
\begin{center}
\title{Two Upper Bounds for both the van der Waerden Numbers $W(r, k + 1)$ and $W(r + 1, k)$, that show the existence of a recurrence relation}
\author{\textbf{Robert J. Betts}}
\maketitle
\emph{The Open University\\Postgraduate Department of Mathematics and Statistics~\footnote{2012--2013. Studies postponed due to serious physical illness.}\\ (Main Campus) Walton Hall, Milton Keynes, MK7 6AA, UK\\
Robert\_Betts@alum.umb.edu}
\end{center}
\begin{abstract}
Using a method we have utilized previously, namely through a finite power series expansion which also sometimes is known as the ``radix polynomial" representation of an integer, we find an upper bound for a van der Waerden number that has a recurrence property.\footnote{\textbf{Mathematics Subject Classification} (2010): Primary 11A63, 11B25; Secondary 68R01.},\footnote{\textbf{ACM Classification}: F.2.1, G.2.0},\footnote{\textbf{Keywords}: Arithmetic progression, integer colorings, van der Waerden number.}
\end{abstract}
\section{Introduction}
Any van der Waerden number $W(r, k)$~\cite{Graham and Rothschild, Graham and Spencer, van der Waerden, Landman and Robertson, Khinchin}, where $r$ is the number of integer colorings in the interval $[1, W(r, k)]$ on $\mathbb{R}$, and $k$ is the number of terms in the arithmetic progression within the interval, can be expanded into an integer polynomial in $k$, meaning as
\begin{equation}
W(r, k) = c_{m}k^{m} + c_{m - 1}k^{m - 1} + \cdots + c_{0} \in [k^{m}, k^{m + 1}) \subset \mathbb{R},
\end{equation}
where \(c_{m} \in [1, k - 1]\), \(c_{m - 1}, c_{m - 2}, \ldots, c_{0} \in [0, k - 1]\) are all integers and where each such van der Waerden number can be found within some interval $[k^{m}, k^{m + 1})$ on $\mathbb{R}$ for some positive integer exponent $m$. Ordinarily,
\begin{equation}
(c_{m}c_{m - 1}\cdots c_{0})_{k},
\end{equation}
would denote the base $k$ expansion~\cite{Abramowitz, Rosen}, of the integer $W(r, k)$. Nevertheless the expansion we denote as 
\begin{equation}
c_{m}k^{m} + c_{m - 1}k^{m - 1} + \cdots + c_{0},
\end{equation}
as a finite power series expansion in powers of $k$ or as an integer polynomial representation in $k$ for $W(r, k)$, still is an integer we can sum to the base ten integer $W(r, k)$~\cite{Betts3, Betts2, Betts1}. In general we have always a positive real exponent \(\log_{k}W(r, k) = \delta_{k}(r, k)\) such that \(k^{m} \leq k^{\delta_{k}(r, k)} < k^{m + 1}\), where \(W(r, k) = k^{\delta_{k}(r, k)}\) and where
\begin{equation}
m = \lfloor \delta_{k}(r, k) \rfloor.
\end{equation}
The reader should understand the inequality \(k^{m} \leq W(r, k) < k^{m + 1}\) \emph{always is true} since
$$
c_{m}k^{m} + c_{m - 1}k^{m - 1} + \cdots + c_{0} \in [k^{m}, k^{m + 1}) \Longrightarrow W(r, k) \in [k^{m}, k^{m + 1})
$$ 
is true, because 
$$
W(r, k) = c_{m}k^{m} + c_{m - 1}k^{m - 1} + \cdots + c_{0}.
$$
All integers inclusive from ten to $99$ lie within the same interval $[10, 10^{2})$. All integers from four to seven lie within the same interval $[2^{2}, 2^{3})$. Similarly all integers inclusive from $k^{m}$ to $k^{m + 1} - 1$, which as we have seen does include $W(r, k)$ as well, lie within the interval $[k^{m}, k^{m + 1})$. In fact many applied computer scientists and in particular computer network engineers among them who are familiar with the gory details of IANA based IPv4/IPv6 addressing and network routing for packet switching applications, domain name addressing, etc.,~\cite{Kurose, Bates}, understand that all these depend heavily on base two and base sixteen number representations, so that IPv4 addresses can be assigned for instance, from 0.0.0.0 to 255.255.255.255, where \(2^{7} = 128, 2^{8} - 1 = 255, 2^{8} = 256\).\\
\indent Some researchers and authors~\cite{Kornerup}, have referred to the finite power series expansion such as the one in Eqn. (3) as being a ``radix polynomial" or representation. For our purposes we shall use the ``radix polynomial" or finite expansion of $W(r, k)$ into powers of $k$ (one may choose whatever terminology one prefers here, whether radix polynomial or finite power series expansion in $k$) only as an expansion of $W(r, k)$ such that the result is a base ten integer although the base $k$ integer (which does \emph{not} concern us here) would be denoted ordinarily as $(c_{m}c_{m - 1}\cdots c_{0})_{k}$. For example van der Waerden number \(W(2, 6) = 1132\) in base six would be denoted as $5124_{6}$ where \(m = 3, c_{3} = 5\), \(c_{2} = 1, c_{1} = 2\), \(c_{0} = 4\), while here we only are concerned with the expansion of this integer $1132$ (See Eqns. (6)--(7)) as a base ten integer but expanded into powers of six as in Eqns. (6)--(7). To illustrate what is meant by the last sentence, we are using the fact that in ordinary base ten arithmetic, both of the two different representations with the first expressed as a sum of powers of ten with the second expressed as a sum of powers of six, in
\begin{eqnarray}
1\cdot 10^{3} + 1\cdot 10^{2} + 3\cdot 10^{1} + 2&=&5\cdot 6^{3} + 1\cdot 6^{2} + 2\cdot 6^{1} + 4\nonumber\\
                                               &=&1132 \in [6^{3}, 6^{4}) \subset \mathbb{R},\nonumber
\end{eqnarray}
\emph{sum when we do arithmetic in base ten to the very same van der Waerden number} $1132$, although one of these representations for $1132$ is expressed as a sum of powers of six while the other representation is expressed as a sum of powers of ten.\\
\indent So for the seven van der Waerden numbers $W(2, 3)$, $W(2, 4)$, $W(2, 5)$, $W(2, 6)$, $W(3, 3)$, $W(3, 4)$, $W(4, 3)$, we get
\begin{eqnarray}
W(2, 3)&=&9 = 3^{2} \in [3^{2}, 3^{3}),\\
W(2, 4)&=&35 = 2\cdot 4^{2} + 3 \in [4^{2}, 4^{3}),\nonumber\\
W(2, 5)&=&178 = 5^{3} + 2\cdot 5^{2} + 3 \in [5^{3}, 5^{4}),\\
W(2, 6)&=&1132 = 5\cdot 6^{3} + 1\cdot 6^{2} + 2\cdot 6^{1} + 4 \in [6^{3}, 6^{4}),\nonumber\\
W(3, 3)&=&27 = 3^{3} \in [3^{3}, 3^{4}),\\
W(3, 4)&=&293 = 4^{4} + 2\cdot 4^{2} + 4^{1} + 1 \in [4^{4}, 4^{5}),\\ 
W(4, 3)&=&76 = 2\cdot 3^{3} + 2\cdot 3^{2} + 3^{1} + 1 \in [3^{3}, 3^{4}).
\end{eqnarray} 
\section{The Rational numbers $\frac{W(r, k + 1)}{W(r, k)}$}
The following six known van der Waerden numbers $W(r, k)$
\begin{eqnarray}
W(2, 3)&=&9, \: W(2, 4) = 35,\\
W(2, 5)&=&178, \: W(2, 6) = 1132,\\
W(3, 3)&=&27, \: W(3, 4) = 293,
\end{eqnarray}
have an interesting property. Notice the following rational numbers, namely,
\begin{eqnarray}
\frac{W(2, 4)}{W(2, 3)}&=&\frac{35}{9} = 3.88 \approx 4,\\
\frac{W(2, 5)}{W(2, 4)}&=&\frac{178}{35} = 5.08 \approx 5\nonumber\\
\frac{W(2, 6)}{W(2, 5)}&=&\frac{1132}{178} = 6.35 \approx 6,\\
\frac{W(3, 4)}{W(3, 3)}&=&\frac{293}{27} = 10.85.
\end{eqnarray}
One can see we get the approximations \(W(2, 4) \approx 4 \cdot W(2, 3)\), \(W(2, 5) \approx 5 \cdot W(2, 4)\), \(W(2, 6) \approx 6 \cdot W(2, 5)\) and \(W(3, 4) \approx 11\cdot W(3, 3)\). In three of these four examples one notices that \(W(r, k + 1) \approx (k + 1)\cdot W(r, k)\) for the particular cases \(k + 1 \in \{4, 5, 6\}\). This elicits the question: \emph{How does $W(r, k + 1)$ increase in relation to $W(r, k)$}?\\
\indent However we must caution that, as one also can see, 
$$
W(3, 4) \approx 11\cdot W(3, 3), \: 11 \not = 4,
$$ 
violates any assumption that \(W(r, k + 1) \approx (k + 1)W(r, k)\) strictly is true always! Nevertheless we still were able to find a coarser upper bound on $W(r, k + 1)$ that does have a recurrence property, in that the upper bound on $W(r, k + 1)$ does depend upon the product $(k + 1)W(r, k)$. Moreover although 
\begin{equation}
\frac{W(3, 4)}{W(3, 3)} = \frac{293}{27} = 10.85 \not \approx 4,
\end{equation}
where here \(k + 1 = 4\), we do perceive that 
\begin{eqnarray}
\frac{W(3, 4)}{W(3, 3)}&=&\frac{293}{27} = 10.85 < 4\cdot 3^{4 - 3}\cdot \left(1 + \frac{1}{3}\right)^{4}\\
                       &=&37.92,
\end{eqnarray}
\(k = 3, k + 1 = 4\), which is related directly to the inequality we derive in the next Section.
\section{The  recurrence relationship between van der Waerden Numbers $W(r, k + 1)$ and $W(r, k)$}
Let \(c_{m_{k}} \in [1, k - 1], c_{m_{k} - 1}, c_{m_{k} - 2}, \ldots c_{0, m_{k}} \in [0, k - 1]\) and 
\begin{equation}
c_{m_{k + 1}} \in [1, k], c_{m_{k + 1} - 1}, c_{m_{k + 1} - 2}, \ldots c_{0, m_{k + 1}} \in [0, k],
\end{equation}
be integers and $m_{k}$, $m_{k + 1}$ two positive integer exponents, such that
\begin{eqnarray}
& &W(r, k) = c_{m_{k}}k^{m_{k}} + c_{m_{k} - 1}k^{m_{k} - 1} + \cdots + c_{0, m_{k}} \in [k^{m_{k}}, k^{m_{k} + 1}),\\ 
& &W(r, k + 1) = c_{m_{k + 1}}(k + 1)^{m_{k + 1}} + c_{m_{k + 1} - 1}(k + 1)^{m_{k + 1} - 1} + \cdots + c_{0, m_{k + 1}}\nonumber\\
& &\in [(k + 1)^{m_{k + 1}}, (k + 1)^{m_{k + 1} + 1}).
\end{eqnarray}
In this Section we show that
\begin{equation}
W(r, k + 1) < (k + 1)W(r, k)\cdot k^{m_{k + 1} - m_{k}}(1 + o(1)),
\end{equation}
for large $k$.
\begin{theorem}
For \(k > 2\),
\begin{equation}
W(r, k + 1) < (k + 1)^{m_{k + 1} + 1} \leq (k + 1)W(r, k)\cdot k^{m_{k + 1} - m_{k}}\left(1 + \frac{1}{k}\right)^{m_{k + 1}},
\end{equation}
where 
$$
\left(1 + \frac{1}{k}\right)^{m_{k + 1}} = 1 + o(1),
$$
as $k$ grows large. Moreover
$$
(k + 1)\cdot k^{m_{k + 1} - m_{k}}\left(1 + \frac{1}{k}\right)^{m_{k + 1}} > 1.
$$
\end{theorem}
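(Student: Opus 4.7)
The statement decomposes naturally into four pieces: the strict left inequality $W(r, k+1) < (k+1)^{m_{k+1}+1}$, the middle inequality relating this to $(k+1)W(r,k)\cdot k^{m_{k+1}-m_k}(1+1/k)^{m_{k+1}}$, the asymptotic claim on $(1 + 1/k)^{m_{k+1}}$, and the elementary positivity claim at the end. I would dispatch them in that order, since only the asymptotic piece requires anything beyond the setup of the section.

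The leftmost inequality is immediate from the interval containment $W(r, k+1) \in [(k+1)^{m_{k+1}}, (k+1)^{m_{k+1}+1})$ already recorded in (22), so no additional work is needed. For the middle inequality the key algebraic identity is $(k+1) = k(1 + 1/k)$, which rewrites
$$
(k+1)^{m_{k+1}+1} = (k+1)\cdot k^{m_{k+1}}\left(1 + \frac{1}{k}\right)^{m_{k+1}}.
$$
I would then split $k^{m_{k+1}} = k^{m_k}\cdot k^{m_{k+1}-m_k}$ and apply the lower interval bound $k^{m_k} \leq W(r, k)$ from (21). Substituting yields the middle inequality directly, with essentially no further algebra required.

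For the asymptotic $(1 + 1/k)^{m_{k+1}} = 1 + o(1)$, the natural move is to write $(1+1/k)^{m_{k+1}} = \bigl[(1+1/k)^{k}\bigr]^{m_{k+1}/k}$, whose limit is $1$ provided $m_{k+1}/k \to 0$ as $k \to \infty$. This is the step I expect to be the main obstacle, since $m_{k+1} = \lfloor \log_{k+1} W(r, k+1) \rfloor$ can be extremely large in view of known (super-exponential) upper bounds on $W(r, k+1)$; justifying the required slow growth may demand an auxiliary hypothesis or an appeal to a finer bound on $m_{k+1}$. Finally, the positivity claim $(k+1)\cdot k^{m_{k+1}-m_k}(1 + 1/k)^{m_{k+1}} > 1$ is handled case-wise: if $m_{k+1} \geq m_k$ each factor is at least $1$ and $(k+1) \geq 4$ since $k > 2$; if instead $m_{k+1} < m_k$, one balances $(k+1)(1+1/k)^{m_{k+1}}$ against the positive power $k^{m_k - m_{k+1}}$ using $k > 2$ together with the monotonicity $W(r,k+1) \geq W(r,k)$, which forces the gap $m_k - m_{k+1}$ to be small.
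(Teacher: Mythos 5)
Your treatment of the two displayed inequalities is essentially the paper's argument, just streamlined: the paper reaches the same conclusion by writing the ratio $W(r,k+1)/W(r,k)$ as a quotient of radix polynomials, bounding the numerator by $(k+1)^{m_{k+1}+1}$, factoring $k^{m_k}$ out of the denominator, and using $c_{m_k} + c_{m_k-1}/k + \cdots + c_{0,m_k}/k^{m_k} \geq 1$ — which is exactly your lower bound $k^{m_k} \leq W(r,k)$ in disguise. Your direct substitution via $(k+1) = k(1+1/k)$ buys nothing new mathematically but avoids the detour through the ratio. For the final positivity claim you work harder than necessary: the paper simply observes that the quantity in question bounds $W(r,k+1)/W(r,k)$ from above, and this ratio exceeds $1$ because $W(r,k+1) > W(r,k)$; your second case (balancing $k^{m_k - m_{k+1}}$ against the other factors) is exactly the situation that this one-line argument handles cleanly.

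The important point is the one you flagged as "the main obstacle." You are right to be suspicious: $(1+1/k)^{m_{k+1}} \to 1$ requires $m_{k+1}/k \to 0$, and since $m_{k+1} = \lfloor \log_{k+1} W(r,k+1) \rfloor$ and all known bounds on $W(r,k)$ grow far faster than any fixed exponential in $k$, there is no reason to expect $m_{k+1} = o(k)$ — indeed the known lower bounds already give $m_{k+1}$ growing at least linearly in $k$. The paper's own justification is to expand $(1+1/k)^{m_{k+1}}$ by the binomial theorem and pass to the limit term by term, concluding each term $\binom{m_{k+1}}{j}k^{-j}$ vanishes; this is invalid because $m_{k+1}$ depends on $k$, so both the individual terms and the number of terms grow with $k$, and termwise limits cannot be interchanged with the sum. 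So the gap you identified is real, and it is present in the paper's proof as well; your honest refusal to paper over it is the more rigorous position. If you want the theorem's first display (which does not involve the $o(1)$ claim), your argument is complete; the asymptotic statement as written should be regarded as unproven without an additional hypothesis controlling the growth of $m_{k+1}$ relative to $k$.
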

\begin{proof}
\begin{eqnarray}
\frac{W(r, k + 1)}{W(r, k)}&=     &\frac{c_{m_{k + 1}}(k + 1)^{m_{k + 1}} + c_{m_{k + 1} - 1}(k + 1)^{m_{k + 1} - 1} + \cdots + c_{0, m_{k + 1}}}{c_{m_{k}}k^{m_{k}} + c_{m_{k} - 1}k^{m_{k} - 1} + \cdots + c_{0, m_{k}}}\nonumber\\
                           &<     &\frac{(k + 1)^{m_{k + 1} + 1}}{c_{m_{k}}k^{m_{k}} + c_{m_{k} - 1}k^{m_{k} - 1} + \cdots + c_{0, m_{k}}}\\
                           &=     &\frac{(k + 1)^{m_{k + 1} + 1}}{k^{m_{k}}\left(c_{m_{k}} + \frac{c_{m_{k} - 1}}{k} + \frac{c_{m_{k} - 2}}{k^{2}} + \cdots + \frac{c_{0, m_{k}}}{k^{m_{k}}}\right)}\nonumber\\
                           &=     &\frac{k^{m_{k + 1} - m_{k}}(k + 1)\left(1 + \frac{1}{k}\right)^{m_{k + 1}}}{\left(c_{m_{k}} + \frac{c_{m_{k} - 1}}{k} + \frac{c_{m_{k} - 2}}{k^{2}} + \cdots + \frac{c_{0, m_{k}}}{k^{m_{k}}}\right)}\\
                           &\leq  &\frac{k^{m_{k + 1} - m_{k}}(k + 1)\left(1 + \frac{1}{k}\right)^{m_{k + 1}}}{1},
\end{eqnarray}
since
\begin{equation}
1 \leq \left(c_{m_{k}} + \frac{c_{m_{k} - 1}}{k} + \frac{c_{m_{k} - 2}}{k^{2}} + \cdots + \frac{c_{0, m_{k}}}{k^{m_{k}}}\right), 
\end{equation}
implies Eqns. (25)--(26). One derives the inequality that leads to the right hand side of Eqn. (24) first from the fact that \(W(r, k + 1) < (k + 1)^{m_{k + 1} + 1}\) (See Eqn. (21)), by dividing both $W(r, k + 1)$ and $(k + 1)^{m_{k + 1} + 1}$ by $W(r, k)$, then finally by substituting the radix polynomials for $W(r, k + 1)$ and $W(r, k)$ respectively, that are in Eqns. (20)--(21).  Hence the argument in Eqns. (24)--(26) breaks down to
\begin{equation}
\frac{W(r, k + 1)}{W(r, k)} < \frac{(k + 1)^{m_{k + 1} + 1}}{W(r, k)} \leq k^{m_{k + 1} - m_{k}}(k + 1)\left(1 + \frac{1}{k}\right)^{m_{k + 1}},
\end{equation}
from which we derive, multiplying Eqn. (28) through by $W(r, k)$,
\begin{equation}
W(r, k + 1) < (k + 1)^{m_{k + 1} + 1} \leq (k + 1)W(r, k)\cdot k^{m_{k + 1} - m_{k}}\left(1 + \frac{1}{k}\right)^{m_{k + 1}}.
\end{equation}
Finally since \(|1/k| < 1\) is true for all \(k > 2\) we can expand 
\begin{equation}
\left(1 + \frac{1}{k}\right)^{m_{k + 1}},
\end{equation}
by the Binomial theorem then take limits as \(k \rightarrow \infty\), as 
\begin{eqnarray}
\lim_{k \rightarrow \infty}\left(1 + \frac{1}{k}\right)^{m_{k + 1}}&=&1 + \lim_{k \rightarrow \infty} {m_{k + 1} \choose 1} \frac{1}{k} + \lim_{k \rightarrow \infty}{m_{k + 1} \choose 2}\frac{1}{k^{2}} + \cdots \nonumber\\
                                                                   &=&1 + 0 + 0 + \cdots.
\end{eqnarray}
Therefore we derive as an asymptotic result~\cite{Erdelyi, Knopp, RosenSed}, 
\begin{equation}
\left(1 + \frac{1}{k}\right)^{m_{k + 1}} = (1 + o(1))^{m_{k + 1}} = 1 + o(1),
\end{equation}
which means in Eqn. (29),
\begin{equation}
W(r, k + 1) < (k + 1)^{m_{k + 1} + 1} \leq (k + 1)W(r, k)\cdot k^{m_{k + 1} - m_{k}}(1 + o(1)),
\end{equation}
as $k$ grows large. Finally we must have
$$
(k + 1)\cdot k^{m_{k + 1} - m_{k}}\left(1 + \frac{1}{k}\right)^{m_{k + 1}} > 1,
$$
because in Eqn. (23) and in general even, \(W(r, k + 1) > W(r, k)\).
\end{proof}
The Theorem shows that the rational numbers 
\begin{equation}
\frac{W(r, k + 1)}{W(r, k)},
\end{equation}
are bounded above by
\begin{equation}
(k + 1)k^{m_{k + 1} - m_{k}}\left(1 + \frac{1}{k}\right)^{m_{k + 1}},
\end{equation}
and as $k$ grows large,
$$
\frac{W(r, k + 1)}{W(r, k)} < (k + 1)k^{m_{k + 1} - m_{k}}(1 + o(1)),
$$
where the positive integer exponents $m_{k + 1}$, $m_{k}$, already were described in Eqns. (19)--(21) at the beginning of this Section. Furthermore Theorem 3.1 explains the intriguing computational results we had obtained in Section 2. That is, since in each case we can derive the exponents \(m_{k}, m_{k + 1}\) as needed from Eqns. (5)--(9),
\begin{eqnarray}
\frac{W(2, 4)}{W(2, 3)}&=&\frac{35}{9} = 3.88 \Longrightarrow W(2, 4) = 35 \approx 3.88W(2, 3)\\
                       &<&4W(2, 3)3^{2 - 2}\left(1 + \frac{1}{3}\right)^{2}\nonumber\\
                       &=&64.00,\nonumber \\
\frac{W(2, 5)}{W(2, 4)}&=&\frac{178}{35} = 5.08 \Longrightarrow W(2, 5) = 178 \approx 5.08W(2, 4)\\
                       &<&5W(2, 4)4^{3 - 2}\left( 1 + \frac{1}{4}\right)^{3}\nonumber\\
                       &=&1367.18,\nonumber\\
\frac{W(2, 6)}{W(2, 5)}&=&\frac{1132}{178} = 6.35 \Longrightarrow W(2, 6) = 1132 \approx 6.35W(2, 5)\\
                       &<&6W(2, 5)5^{3 - 3}\left(1 + \frac{1}{5}\right)^{3}\nonumber\\
                       &=&1845.50,\nonumber\\
\frac{W(3, 4)}{W(3, 3)}&=&\frac{293}{27} = 10.85 \Longrightarrow 4W(3, 3)\\
                       &<&W(3, 4) = 293 \approx 10.85W(3, 3)\nonumber\\
                       &<&4W(3, 3)3^{4 - 3}\left( 1 + \frac{1}{3}\right)^{4} = 1023.98.
\end{eqnarray}
Finally using previously known results~\cite{Rabung and Lotts, Betts3}, Theorem 3.1 places bounds on the currently unknown van der Waerden number $W(2, 7)$ as
\begin{eqnarray}
2^{11}&<              &3703 < W(2, 7) < 7W(2, 6)6^{m_{k + 1} - 3}\left(1 + \frac{1}{6}\right)^{m_{k + 1}}\\
      &\Longrightarrow&W(2, 7) \in \left(3703, 7924\cdot 6^{m_{k + 1} - 3}\left(1 + \frac{1}{6}\right)^{m_{k + 1}}\right)\nonumber\\
      &\subset        &\mathbb{R},
\end{eqnarray}
for some positive integer exponent $m_{k + 1}$ and for some nonnegative integers \(c_{m_{k + 1}} \in [1, 6]\), \(c_{m_{k + 1} - 1}, c_{m_{k + 1} - 2}, \ldots, c_{0, m_{k + 1}} \in [0, 6]\)~\cite{Betts3, Betts2}, such that
\begin{equation}
W(2, 7) = c_{m_{k + 1}}7^{m_{k + 1}} + c_{m_{k + 1} - 1}7^{m_{k + 1} - 1} + \cdots + c_{0, m_{k + 1}} \in [7^{m_{k + 1}}, 7^{m_{k + 1} + 1}).
\end{equation}
\subsection{A Corollary based on Theorem 3.1}
Representing $W(r, k + 1)$ as a radix polynomial as was done for Theorem 3.1, reveals that every van der Waerden number $W(r, k)$ lies always between the integer powers $k^{m}$ and $k^{m + 1}$ when the radix is $k$, and that setting \(\delta(r, k) = \log_{k}W(r, k)\) we have that \(\delta(r, k) \in [m, m + 1)\). All this leads us to the fact that there exists a recurrence formula for $W(r, k + 1)$ as well as for $W(r + 1, k)$ (See Section 4). Since with the radix polynomial representation for $W(r, k + 1)$ we know that there exists always a positive real number $\delta(r, k + 1)$, such that 
\begin{equation}
W(r, k + 1) = (k + 1)^{\delta(r, k + 1)},
\end{equation}
where 
\begin{eqnarray}
& &\delta(r, k + 1) = \log_{k + 1} W(r, k + 1),\\
& &m_{k + 1} \leq \delta(r, k + 1) < m_{k + 1} + 1, 
\end{eqnarray}
Theorem 3.1 motivates us to pose the following Corollary, without proof, since the proof is simple not intricate, and straightforward enough to be proved by the reader.
\begin{corollary}
Let \(\delta(r, k + 1) = \log_{k + 1}W(r, k + 1)\) and \(m_{k + 1} = \lfloor \delta(r, k + 1) \rfloor\). Define a function
\begin{equation}
c: \mathbb{N} \times \mathbb{R} \rightarrow \mathbb{R},
\end{equation}
as
$$
c(k, \delta(r, k + 1)) = (k + 1)^{\delta(r, k + 1)} - (k + 1)W(r, k)k^{m_{k + 1} - m_{k}},
$$
where
\begin{equation}
|c(k, \delta(r, k + 1))| = \left|(k + 1)^{\delta(r, k + 1)} - (k + 1)W(r, k)k^{m_{k + 1} - m_{k}}\right|.
\end{equation}
Then
\begin{equation}
W(r, k + 1) = (k + 1)W(r, k)k^{m_{k + 1} - m_{k}} + c(k, \delta(r, k + 1)).
\end{equation}
\end{corollary}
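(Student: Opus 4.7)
The plan is to recognize that the asserted identity is a pure algebraic rearrangement of the definition of $c$, once one substitutes the defining relation $W(r, k+1) = (k+1)^{\delta(r, k+1)}$. Accordingly I expect the proof to reduce to a two-line calculation, with no genuine obstacle beyond verifying that the two interpretations of the exponent $m_{k+1}$ in play are consistent.

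First I would extract from $\delta(r, k+1) = \log_{k+1} W(r, k+1)$ the equivalent form
$$W(r, k+1) = (k+1)^{\delta(r, k+1)},$$
already displayed in the paragraph preceding the Corollary. Next, I would add $(k+1)W(r, k)\,k^{m_{k+1}-m_{k}}$ to both sides of the defining equation for $c$, namely
$$c(k, \delta(r, k+1)) = (k+1)^{\delta(r, k+1)} - (k+1)W(r, k)\,k^{m_{k+1}-m_{k}},$$
to obtain
$$(k+1)^{\delta(r, k+1)} = (k+1)W(r, k)\,k^{m_{k+1}-m_{k}} + c(k, \delta(r, k+1)).$$
Substituting $W(r, k+1)$ for $(k+1)^{\delta(r, k+1)}$ on the left-hand side then yields the claim of the Corollary.

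The one point I would pause over, and the only place where anything could go wrong if handled carelessly, is the consistency of the exponent $m_{k+1}$. In the Corollary it is declared as $\lfloor \delta(r, k+1) \rfloor$, whereas in Theorem 3.1 it denotes the leading exponent in the radix-polynomial expansion of $W(r, k+1)$ in powers of $k+1$. These two definitions coincide because the inequality $(k+1)^{m_{k+1}} \leq W(r, k+1) < (k+1)^{m_{k+1}+1}$, which was emphasised near the start of the paper, is equivalent to $m_{k+1} = \lfloor \log_{k+1} W(r, k+1) \rfloor$. By way of interpretation rather than as part of the proof, the content of the Corollary is then simply that $c(k, \delta(r, k+1))$ is the correction term needed to convert the upper-bound inequality of Theorem 3.1 into an exact equality, and the absolute-value wording in the hypothesis permits $c$ to take either sign, which is indeed necessary since some of the numerical examples in Section 2 give $(k+1)W(r, k)\,k^{m_{k+1}-m_{k}} > W(r, k+1)$.
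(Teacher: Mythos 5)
Your proposal is correct and matches the argument the paper intends: the Corollary is left "without proof" precisely because it is the two-line rearrangement you describe, namely substituting $W(r,k+1)=(k+1)^{\delta(r,k+1)}$ into the definition of $c(k,\delta(r,k+1))$ and moving the term $(k+1)W(r,k)k^{m_{k+1}-m_{k}}$ to the other side. Your remark that $m_{k+1}=\lfloor\delta(r,k+1)\rfloor$ agrees with the leading radix-polynomial exponent is also consistent with the paper's Eqns.\ (43)--(44), so there is nothing further to add.
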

The fact is that by using Eqn. (44), if we define $c(k, \delta(r, k + 1))$ always as 
\begin{equation}
c(k, \delta(r, k + 1)) = (k + 1)^{\delta(r, k + 1)} - (k + 1)W(r, k)k^{m_{k + 1} - m_{k}},
\end{equation}
then there exists some value $c(k, \delta(r, k + 1))$ always, such that the recurrence relation 
$$
W(r, k + 1) = (k + 1)W(r, k)k^{m_{k + 1} - m_{k}} + c(k, \delta(r, k + 1))
$$
exists. The goal then would be, instead of trying to find outright the unknown value of $W(r, k + 1)$ when one already knows $W(r, k)$, $r$, and $k$, either to find or to estimate the positive real exponent $\delta(r, k + 1)$ for which \(W(r, k + 1) = (k + 1)^{\delta(r, k + 1)}\) and the interval 
\begin{equation}
[m_{k + 1}, m_{k + 1} + 1)
\end{equation}
on $\mathbb{R}$, in which this exponent lies. It follows automatically that \(m_{k + 1} = \lfloor \delta(r, k + 1) \rfloor\).
\subsection{Examples of Values for $c(k, \delta(r, k + 1))$}
The examples are for known cases but the values also can be found whenever the value for some unknown $W(r, k + 1)$ is found or as an alternative, when $\delta(r, k + 1)$ is found or estimated first, while $W(r, k)$, $k$, already are known.\\
\indent With \(W(2, 3) = 9, W(2, 4) = 35\), \(m_{3} = m_{4} = 2\), \(\delta(2, 4) = 2.5648\),
\begin{eqnarray}
c(3, \delta(2, 4), W(2, 3))&=&4^{2.5648} - 4W(2, 3)\cdot 3^{2 - 2}\\
                  &=&-0.9923 \approx -1.
\end{eqnarray}
With \(W(2, 4) = 35, W(2, 5) = 178\), \(m_{4} = 2, m_{5} = 3\), \(\delta(2, 5) = 3.2205\),
\begin{eqnarray}
c(4, \delta(2, 5), W(2, 4))&=      &5^{3.2205} - 5W(2, 4)\cdot 4^{1}\\
                  &\approx&-522.
\end{eqnarray}
\section{A Recurrence relation for the van der Waerden Number $W(r + 1, k)$}
There exists also a recurrence formula for $W(r + 1, k)$. Let \(b_{n_{r}} \in [1, r - 1], b_{n_{r} - 1}, b_{n_{r} - 2}, \ldots b_{0, n_{r}} \in [0, r - 1]\) and 
\begin{equation}
b_{n_{r + 1}} \in [1, r], b_{n_{r + 1} - 1}, b_{n_{r + 1} - 2}, \ldots b_{0, n_{r + 1}} \in [0, r],
\end{equation}
be integers and $n_{r}$, $n_{r + 1}$ two positive integer exponents, such that
\begin{equation}
W(r, k) = b_{n_{r}}r^{n_{r}} + b_{n_{r} - 1}r^{n_{r} - 1} + \cdots + b_{0, n_{r}} \in [r^{n_{r}}, r^{n_{r} + 1}),
\end{equation} 
and
\begin{eqnarray}
& &W(r + 1, k) = b_{n_{r + 1}}(r + 1)^{n_{r + 1}} + b_{n_{r + 1} - 1}(r + 1)^{n_{r + 1} - 1} + \cdots + b_{0, n_{r + 1}}\nonumber\\
& &\nonumber\\
& &\in [(r + 1)^{n_{r + 1}}, (r + 1)^{n_{r + 1} + 1}).
\end{eqnarray}
Then just as we were able to prove Eqn. (29) and Eqn. (33), in a manner similar to how we proved Theorem 3.1, we also are able to prove as a corollary to this Theorem, the two inequalities 
\begin{equation}
W(r + 1, k) < (r + 1)^{n_{r + 1} + 1} \leq (r + 1)W(r, k)\cdot r^{n_{r + 1} - n_{r}}\left(1 + \frac{1}{r}\right)^{n_{r + 1}},
\end{equation}
and also
\begin{equation}
W(r + 1, k) < (r + 1)^{n_{r + 1} + 1} \leq (r + 1)W(r, k)\cdot r^{n_{r + 1} - n_{r}}(1 + o(1)),
\end{equation}
when $r$ grows large.\\
\indent Since with the radix polynomial representation for $W(r + 1, k)$ we know that there exists always a real number $\delta(r + 1, k)$, such that 
\begin{equation}
W(r + 1, k) = (r + 1)^{\delta(r + 1, k)},
\end{equation}
where 
\begin{eqnarray}
& &\delta(r + 1, k) = \log_{r + 1} W(r + 1, k),\\
& &n_{r + 1} \leq \delta(r + 1, k) < n_{r + 1} + 1, 
\end{eqnarray}
this motivates us to pose a second Corollary without proof, as it is straightforward enough to be proved by the reader.
\begin{corollary}
Let \(\delta(r + 1, k) = \log_{r + 1}W(r + 1, k)\) and \(n_{r + 1} = \lfloor \delta(r + 1, k) \rfloor\). Define a function
\begin{equation}
b: \mathbb{N} \times \mathbb{R} \rightarrow \mathbb{R},
\end{equation}
as
$$
b(r, \delta(r + 1, k)) = (r + 1)^{\delta(r + 1, k)} - (r + 1)W(r, k)r^{n_{r + 1} - n_{r}},
$$
where 
\begin{equation}
|b(r, \delta(r + 1, k))| = \left|(r + 1)^{\delta(r + 1, k)} - (r + 1)W(r, k)r^{n_{r + 1} - n_{r}}\right|.
\end{equation}
Then
\begin{equation}
W(r + 1, k) = (r + 1)W(r, k)r^{n_{r + 1} - n_{r}} + b(r, \delta(r + 1, k)).
\end{equation}
\end{corollary}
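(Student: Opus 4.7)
The plan is to observe that Corollary 4.1 is, at its core, a trivial algebraic rearrangement of the defining equation for the auxiliary function $b(r,\delta(r+1,k))$ together with the logarithmic identity $W(r+1,k)=(r+1)^{\delta(r+1,k)}$. So the proof I would write would be very short; the work lies mainly in carefully stating what is being used, not in any nontrivial estimate. This mirrors the situation for Corollary 3.1, where the author explicitly remarks that the result is straightforward enough to be left to the reader.

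First I would invoke Eqn. (60), $W(r+1,k) = (r+1)^{\delta(r+1,k)}$, which follows immediately from the definition $\delta(r+1,k) = \log_{r+1}W(r+1,k)$ given in Eqn. (61). Next I would take the definition
$$
b(r,\delta(r+1,k)) = (r+1)^{\delta(r+1,k)} - (r+1)W(r,k)\,r^{n_{r+1}-n_{r}}
$$
supplied in the statement and simply transpose the subtracted term to the other side, yielding
$$
(r+1)^{\delta(r+1,k)} = (r+1)W(r,k)\,r^{n_{r+1}-n_{r}} + b(r,\delta(r+1,k)).
$$
Finally I would substitute $W(r+1,k)$ for $(r+1)^{\delta(r+1,k)}$ on the left-hand side, giving exactly Eqn. (64).

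The only thing that needs to be checked to ensure the statement is meaningful is that the ingredients on the right-hand side are well-defined. The exponent $n_{r+1} = \lfloor \delta(r+1,k)\rfloor$ is a well-defined nonnegative integer because $W(r+1,k)\geq 1$, and the radix polynomial representation of $W(r+1,k)$ in base $r+1$ set up in Eqns. (54)--(55) is consistent with this choice of $n_{r+1}$ by the same reasoning as in Section 1 (see Eqn. (4)); similarly $n_{r}$ is well-defined from Eqn. (53). There is no real obstacle: the corollary is purely a bookkeeping identity that promotes the defining equation of $b$ into a recurrence statement for $W(r+1,k)$ in terms of $W(r,k)$ plus a correction term. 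If anything, the only subtlety worth flagging in the write-up is that, in contrast with Theorem 3.1 and the inequalities (56)--(57), no asymptotic estimate, no binomial expansion, and no hypothesis on the size of $r$ is needed here — the identity is exact for every admissible triple $(r,k,W(r+1,k))$.
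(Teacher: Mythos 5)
Your proof is correct and follows exactly the route the paper intends: the paper omits the proof as ``straightforward enough to be proved by the reader,'' and its surrounding discussion (the identity $W(r+1,k)=(r+1)^{\delta(r+1,k)}$ together with the definition of $b$) is precisely the rearrangement you carry out. Nothing is missing.
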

Indeed when we define $b(r, \delta(r + 1, k))$ as 
\begin{equation}
b(r, \delta(r + 1, k)) = (r + 1)^{\delta(r + 1, k)} - (r + 1)W(r, k)r^{n_{r + 1} - n_{r}},
\end{equation}
there is some $b(r, \delta(r + 1, k))$ always whether this value is positive or negative, such that Eqn. (66) holds as a recurrence relation.
In contrast to trying to find or to estimate $W(r + 1, k)$, outright when one knows already what $W(r, k)$, $r$, and $k$ are, the easier goal then could be either to find or to estimate the value of the positive real exponent \(\delta(r + 1, k) \in [n_{r + 1}, n_{r + 1} + 1)\) for which \(W(r + 1, k) = (r + 1)^{\delta(r + 1, k)}\), and the interval 
\begin{equation}
[n_{r + 1}, n_{r + 1} + 1),
\end{equation}
in which $\delta(r + 1, k)$ lies and for which
\begin{equation}
W(r + 1, k) = (r + 1)^{\delta(r + 1, k)},
\end{equation}
where
\begin{equation}
n_{r + 1} = \lfloor \delta(r + 1, k) \rfloor.
\end{equation}
\section{Significance of our results: How $W(r, k + 1)$ and $W(r + 1, k)$ are bounded on $\mathbb{R}$ by the recurrence}
Some peer reviewers and journal editors have expressed the opinion that these results simply are insignificant because they shed no new light on van der Waerden numbers. That of course is their prerogative. Although in the world today power and tradition usually have more influence than does the truth about a matter, still we beg humbly to differ from such claims that these results~\cite{Betts1},~\cite{Betts2},~\cite{Betts3}, have no significance for van der Waerden numbers. \\
\indent We ask what ought to be more significant, proving deep or pure mathematical results for unknown van der Waerden numbers, finding lower bounds on particular unknown van der Waerden numbers like $W(2, 7)$ and $W(2, 8)$, or actually finding the van der Waerden numbers themselves? With all due respect to the anonymous referees and journal editors mentioned previously who admittedly do have a vital role to play in the publishing of research, our results do have \emph{considerable} significance for numerical computing tasks developed to find unknown van der Waerden numbers $W(r, k + 1)$ and $W(r + 1, k)$ and the unknown exponents $m_{k + 1}$, $n_{r + 1}$, when $W(r, k)$, $k$, $m_{k}$, $r$, $n_{r}$ are known already. Since expanding the van der Waerden numbers $W(r, k)$, $W(r, k + 1)$ and $W(r + 1, k)$ by their radix polynomial representations can lead to the finding of such numerical and computational tasks in the future, this hardly can be called insignificant!\\
\indent Based upon the results in Section Three and Section Four we offer two Conjectures. 
\begin{conjecture}
As $k$ grows large, \(k > r\),
\begin{equation}
\frac{W(r, k + 1)}{W(r, k)} < k^{m_{k + 1} - m_{m}}(k + 1).
\end{equation} 
\end{conjecture}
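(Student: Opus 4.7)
The conjecture differs from the inequality of Theorem 3.1 only by the removal of the tail factor $\left(1 + \frac{1}{k}\right)^{m_{k+1}}$, so the plan is to absorb this factor into the denominator of the intermediate fraction that appeared in the proof of that theorem. Concretely, I would revisit the chain
$$\frac{W(r, k+1)}{W(r, k)} < \frac{k^{m_{k+1}-m_k}(k+1)\left(1 + \tfrac{1}{k}\right)^{m_{k+1}}}{c_{m_k} + \tfrac{c_{m_k-1}}{k} + \cdots + \tfrac{c_{0, m_k}}{k^{m_k}}},$$
where Theorem 3.1 terminated by using the loose lower bound $1$ on the denominator. The conjecture is then equivalent to sharpening this to
$$\frac{W(r, k)}{k^{m_k}} \;=\; c_{m_k} + \frac{c_{m_k-1}}{k} + \cdots + \frac{c_{0, m_k}}{k^{m_k}} \;\geq\; \left(1 + \frac{1}{k}\right)^{m_{k+1}},$$
for all sufficiently large $k$ with $k > r$; if this single inequality is established, the rest of the argument is immediate.

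The second step is asymptotic. By the binomial expansion already used in the proof of Theorem 3.1, the right-hand side above is $1 + o(1)$ as $k \to \infty$, provided $m_{k+1} = o(k)$. This growth condition follows from any of the classical super-polynomial upper bounds on $W(r, k+1)$: since $m_{k+1} = \lfloor \log_{k+1} W(r, k+1) \rfloor$, a bound of the form $\log W(r, k+1) = O(k^{C})$ suffices to force $m_{k+1}/k \to 0$. Hence the task reduces to proving that $W(r, k)/k^{m_k}$ is eventually bounded below by a sequence $1 + \varepsilon_k$ with $\varepsilon_k$ dominating $\left(1 + \tfrac{1}{k}\right)^{m_{k+1}} - 1$.

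The main obstacle is exactly this final step. A priori $W(r, k)/k^{m_k}$ can equal $1$, as it does for $W(2, 3) = 3^{2}$ and $W(3, 3) = 3^{3}$, so one cannot hope to bound it away from $1$ without invoking structural information beyond the radix polynomial expansion. I would attempt to use known lower bounds such as Berlekamp's $W(2, p+1) \geq p \cdot 2^{p}$ for odd primes $p$ to force the leading digit $c_{m_k}$ to be at least $2$ once $k$ exceeds an explicit threshold depending on $r$. Even then, one must rule out an infinite sequence of values of $k$ at which $W(r, k)$ lands unusually close to the lower endpoint $k^{m_k}$ of its interval; the numerical case $W(2, 5) = 178$, $W(2, 6) = 1132$ (giving ratio $6.36 > 6$) already shows the inequality fails for small $k$, so any proof must be genuinely asymptotic and will typically hold only past a $k$-threshold depending on $r$. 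Whether the existing lower bounds on $W(r, k)$ are sharp enough to preclude such near-boundary behavior infinitely often, or whether a finer structural argument is required, is the crux and the reason the claim is offered only as a conjecture.
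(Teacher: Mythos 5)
The paper offers this statement only as a conjecture and supplies no proof, so there is no paper argument to compare yours against; the question is whether your proposal closes the gap the author left open, and it does not (as you concede in your final sentence). Your reduction is the natural and correct one: starting from the chain in the proof of Theorem 3.1, the conjecture would follow if the crude lower bound $1$ on $c_{m_k} + c_{m_k - 1}/k + \cdots + c_{0, m_k}/k^{m_k} = W(r,k)/k^{m_k}$ could be replaced by $\left(1 + \frac{1}{k}\right)^{m_{k+1}}$. Your observation that this fails outright when $W(r,k)$ is an exact power of $k$ (as for $W(2,3) = 3^2$ and $W(3,3) = 3^3$), together with the explicit failure of the conjectured inequality at $k = 5$ (since $m_6 = m_5 = 3$, the claimed bound is $5^0\cdot 6 = 6$, yet $1132/178 = 6.36 > 6$), is a genuinely useful addition that the paper does not record and that shows any proof must be asymptotic.

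Two concrete gaps remain. First, the crux you identify --- bounding $W(r,k)/k^{m_k}$ far enough above $1$ to absorb $\left(1 + \frac{1}{k}\right)^{m_{k+1}}$ for all large $k$ --- is left entirely open; Berlekamp-type lower bounds control $W(2,p+1)$ relative to $p\cdot 2^{p}$, not relative to $k^{m_k}$, and give no control over the leading base-$k$ digit $c_{m_k}$, so the proposed route to forcing $c_{m_k} \geq 2$ is not substantiated. Second, your asymptotic step relies on $m_{k+1} = o(k)$, which you claim follows from a bound of the form $\log W(r, k+1) = O(k^{C})$; no such bound is known. The best known upper bounds on $W(2,k)$ are of tower type, so $m_{k+1} = \lfloor \log_{k+1} W(r, k+1) \rfloor$ is not known to be $o(k)$, and hence even the assertion $\left(1 + \frac{1}{k}\right)^{m_{k+1}} = 1 + o(1)$ is unestablished (the same unjustified step already appears in the paper's proof of Theorem 3.1, where $m_{k+1}$ is implicitly treated as bounded while $k \rightarrow \infty$). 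The statement therefore remains a conjecture after your analysis, exactly as the paper leaves it.
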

\begin{conjecture}
As $r$ grows large, \(r > k\),
\begin{equation}
\frac{W(r + 1, k)}{W(r, k)} < r^{n_{r + 1} - n_{r}}(r + 1).
\end{equation}
\end{conjecture}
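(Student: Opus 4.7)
\textbf{The plan} is to mirror the argument used for Theorem 3.1 with $r$ playing the role that $k$ played there, and then try to squeeze out the correction factor $(1+1/r)^{n_{r+1}}$ that appears in Corollary 4.1. Starting from the radix polynomial representations
$$W(r,k) = b_{n_r}r^{n_r} + b_{n_r-1}r^{n_r-1} + \cdots + b_{0,n_r}, \quad W(r+1,k) = \sum_{i=0}^{n_{r+1}} b_{n_{r+1}-i}(r+1)^{n_{r+1}-i},$$
I would bound the numerator by $W(r+1,k) < (r+1)^{n_{r+1}+1}$, factor $r^{n_r}$ out of the denominator, and apply the inequality $b_{n_r} + b_{n_r-1}/r + \cdots + b_{0,n_r}/r^{n_r} \geq 1$. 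The chain of manipulations is formally identical to Eqns.~(24)--(26) of the proof of Theorem 3.1 and reproduces
$$\frac{W(r+1,k)}{W(r,k)} < (r+1)\,r^{n_{r+1}-n_r}\left(1 + \frac{1}{r}\right)^{n_{r+1}},$$
which is precisely the content of Eqn.~(63) of Corollary 4.1.

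\textbf{The refinement step} is to remove the factor $(1+1/r)^{n_{r+1}}$. A binomial expansion gives $(1+1/r)^{n_{r+1}} = 1 + n_{r+1}/r + O(n_{r+1}^2/r^2)$, which tends to $1$ as $r \to \infty$ provided $n_{r+1} = o(r)$. To convert this asymptotic behavior into the strict inequality of the conjecture, I would marshal three pieces of slack: (i) the integrality gap $W(r+1,k) \leq (r+1)^{n_{r+1}+1} - 1$; (ii) a tighter bound on the leading coefficient $b_{n_{r+1}}$ of $W(r+1,k)$ in base $r+1$, showing that the crude $b_{n_{r+1}} \leq r$ is rarely tight; and (iii) the slack in $W(r,k) > r^{n_r}$ whenever $W(r,k)$ is not a pure power of $r$. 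For $r > k$ sufficiently large, the multiplicative inflation $(1+1/r)^{n_{r+1}}$ is then dominated by the combined multiplicative slack, yielding the strict bound of Conjecture 5.2.

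\textbf{The main obstacle} is precisely that $(1+1/r)^{n_{r+1}}$ is genuinely greater than $1$, not merely $1 + o(1)$ in a way that can be dropped for free. The passage from Corollary 4.1 to Conjecture 5.2 is therefore not a purely asymptotic simplification: it demands a quantitative relationship between the growth of $n_{r+1}$ and the slack in the bound $W(r+1,k) < (r+1)^{n_{r+1}+1}$. Controlling $n_{r+1} = \lfloor \log_{r+1} W(r+1,k) \rfloor$ as $r \to \infty$ with $k$ fixed amounts to controlling the growth rate of $W(r+1,k)$ in $r$, which is itself a deep open question; the best known upper bounds on van der Waerden numbers are of tower type and do not immediately deliver $n_{r+1} = o(r)$. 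For small fixed $k$ such as $k = 3$, where $W(r,3)$ is polynomial in $r$ so that $n_{r+1}$ is eventually constant, the argument above goes through unconditionally; for general $k$ I would expect the conjecture to be provable only conditionally on a hypothesis of the form $n_{r+1} = o(r)$, and I would leave the fully unconditional statement open pending new combinatorial information on the leading radix coefficient $b_{n_{r+1}}$.
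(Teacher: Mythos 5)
The statement you are attempting is posed in the paper as Conjecture 5.2, explicitly \emph{without proof}; the paper only proves the weaker inequality $W(r+1,k) < (r+1)^{n_{r+1}+1} \leq (r+1)W(r,k)\,r^{n_{r+1}-n_r}\left(1+\frac{1}{r}\right)^{n_{r+1}}$ (the $r$-analogue of Theorem 3.1, stated in Section 4). The first half of your proposal correctly reproduces exactly that inequality by the same radix-polynomial manipulation, and your diagnosis that the entire difficulty lies in discarding the factor $\left(1+\frac{1}{r}\right)^{n_{r+1}} > 1$ is accurate — that is precisely why the paper demotes the stronger statement to a conjecture rather than proving it.

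However, your ``refinement step'' does not close the gap, so the proposal is not a proof. The three sources of slack you invoke are never quantified: the integrality gap $(r+1)^{n_{r+1}+1} - W(r+1,k)$ can in principle be as small as $1$; no combinatorial control of the leading radix coefficient $b_{n_{r+1}}$ is actually established; and the slack $W(r,k) - r^{n_r}$ vanishes exactly when $W(r,k)$ is a pure power, which does occur (e.g.\ $W(2,3)=3^2$, $W(3,3)=3^3$). You give no argument that these jointly dominate the multiplicative inflation $\left(1+\frac{1}{r}\right)^{n_{r+1}} - 1 \approx n_{r+1}/r$, and such an argument would require lower bounds on the distance from $W(r+1,k)$ to the power $(r+1)^{n_{r+1}+1}$ above it, which nothing in the radix-polynomial framework supplies. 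Your fallback claim for $k=3$ also rests on a false premise: $W(r,3)$ is not known to be polynomial in $r$ (Behrend-type constructions already force superpolynomial growth), so $n_{r+1}$ is not eventually constant in that case either. The honest conclusion of your own final paragraph — that the unconditional statement remains open — is the correct one, and it matches the paper's treatment of the statement as a conjecture.
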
 
Assume that \(m_{k + 1} - m_{k} \in [0, 1]\) in Theorem 3.1 and that \(n_{r + 1} - n_{r} \in [0, 1]\) in Eqn.(60) both hold. Then the two conjectures become
\begin{conjecture}
As $k$ grows large, \(k > r\),
\begin{equation}
\frac{W(r, k + 1)}{W(r, k)} < k(k + 1).
\end{equation} 
\end{conjecture}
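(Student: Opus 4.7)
The plan is to obtain Conjecture~5.3 as a direct consequence of Conjecture~5.1 combined with the integrality of the exponents $m_{k+1}$ and $m_k$. Since both exponents are nonnegative integers, the hypothesis $m_{k+1}-m_k\in[0,1]$ forces the difference to equal either $0$ or $1$, so $k^{m_{k+1}-m_k}\leq k$. Substituting this into Conjecture~5.1 yields
$$\frac{W(r,k+1)}{W(r,k)}<k^{m_{k+1}-m_k}(k+1)\leq k(k+1),$$
which is precisely Conjecture~5.3. Thus the real work is to justify Conjecture~5.1 itself in the asymptotic regime $k\to\infty$ with $k>r$.

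For Conjecture~5.1 I would begin from Theorem~3.1, which supplies the slightly looser bound
$$\frac{W(r,k+1)}{W(r,k)}<(k+1)\,k^{m_{k+1}-m_k}\!\left(1+\tfrac{1}{k}\right)^{m_{k+1}},$$
and would try to absorb the residual factor $\left(1+1/k\right)^{m_{k+1}}$ into slack that was thrown away inside the proof of that theorem. Reinspecting that proof, the denominator $c_{m_k}+c_{m_k-1}/k+\cdots+c_{0,m_k}/k^{m_k}$ equals exactly $W(r,k)/k^{m_k}\in[1,k)$ but was only lower-bounded by $1$. Keeping this factor explicit gives
$$\frac{W(r,k+1)}{W(r,k)}<\frac{(k+1)\,k^{m_{k+1}-m_k}\!\left(1+1/k\right)^{m_{k+1}}}{W(r,k)/k^{m_k}},$$
so Conjecture~5.1 reduces to showing $\left(1+1/k\right)^{m_{k+1}}\leq W(r,k)/k^{m_k}$ for all sufficiently large $k$ with $k>r$.

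The main obstacle will be establishing this last inequality rigorously. It demands that the excess of $W(r,k)$ above the floor $k^{m_k}$ dominate the exponential correction $\left(1+1/k\right)^{m_{k+1}}$. For fixed $r$ the $o(1)$ claim of Theorem~3.1 tacitly assumes $m_{k+1}=o(k)$, so one expects $\left(1+1/k\right)^{m_{k+1}}\to 1$; but in principle $W(r,k)/k^{m_k}$ could also approach $1$, and ruling that out appears to need an independent growth estimate on $W(r,k)$ in $k$ (for instance a Gowers-type upper bound combined with a matching lower bound on $\delta(r,k)-m_k$) which shows that $W(r,k)/k^{m_k}$ stays bounded away from $1$ as $k>r$ grows. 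Assuming such an external asymptotic can be imported, I would conclude by feeding it into the inequality above to secure Conjecture~5.1, then invoke the integrality reduction of the first paragraph to obtain Conjecture~5.3.
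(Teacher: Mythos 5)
The statement you are addressing is presented in the paper as a \emph{conjecture}, not a theorem: the paper supplies no proof, only the remark that it follows from Conjecture 5.1 once one additionally assumes $m_{k+1}-m_{k}\in[0,1]$. Your first paragraph reproduces exactly that conditional reduction (integrality of the exponents forces $m_{k+1}-m_{k}\in\{0,1\}$, hence $k^{m_{k+1}-m_{k}}\leq k$), so up to that point you are doing precisely what the paper does. The real content of your proposal is therefore the attempt to prove Conjecture 5.1 itself, and that is where a genuine gap remains.

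Your algebra is correct: keeping the denominator $W(r,k)/k^{m_{k}}$ explicit in the proof of Theorem 3.1 reduces Conjecture 5.1 to the inequality $\left(1+1/k\right)^{m_{k+1}}\leq W(r,k)/k^{m_{k}}$. But you do not establish this inequality, and it is not a removable technicality. It fails outright whenever $W(r,k)$ is an exact power of $k$ --- for instance $W(2,3)=3^{2}$ and $W(3,3)=3^{3}$ give $W(r,k)/k^{m_{k}}=1$ while $\left(1+1/k\right)^{m_{k+1}}>1$ --- and for large $k$ nothing currently known about van der Waerden numbers controls the fractional part of $\log_{k}W(r,k)$ well enough to keep $W(r,k)/k^{m_{k}}$ bounded away from $1$. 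The ``external asymptotic'' you propose to import (a Gowers-type upper bound together with a matching lower bound on $\delta(r,k)-m_{k}$) is not available in the form you need. Consequently your argument does not prove the statement; it terminates, exactly as the paper does, with the claim still resting on unproven hypotheses, and it should be presented as a conditional reduction rather than as a proof.
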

\begin{conjecture}
As $r$ grows large, \(r > k\),
\begin{equation}
\frac{W(r + 1, k)}{W(r, k)} < r(r + 1).
\end{equation}
\end{conjecture}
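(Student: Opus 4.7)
The plan is to derive Conjecture 5.4 as a direct algebraic specialization of Conjecture 5.2 under the stated exponent-gap hypothesis $n_{r + 1} - n_{r} \in [0, 1]$. First, I would invoke Conjecture 5.2, which asserts that for large $r > k$,
\[
\frac{W(r + 1, k)}{W(r, k)} < r^{n_{r + 1} - n_{r}}(r + 1).
\]
This inequality is the $(1 + o(1))$-absorbed form of the bound established just after Eqn.~(60) in Section 4, obtained by the same radix-polynomial argument used in the proof of Theorem 3.1 but with the roles of $k$ and $r$ exchanged.

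Next I would apply the standing hypothesis $n_{r + 1} - n_{r} \in [0, 1]$. Because $r \geq 2$ (indeed $r > k$ grows large), the map $x \mapsto r^{x}$ is non-decreasing on $[0, 1]$, so $r^{n_{r + 1} - n_{r}} \leq r^{1} = r$. Substituting this into the previous bound yields
\[
\frac{W(r + 1, k)}{W(r, k)} < r(r + 1),
\]
which is precisely the conclusion of Conjecture 5.4. Modulo the two assumptions being invoked, the derivation is thus a single algebraic substitution together with a monotonicity check.

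The genuine obstacle lies not in the algebra but in validating the two underlying hypotheses. Conjecture 5.2 strengthens the asymptotic bound of Section 4 by absorbing the $(1 + o(1))$ factor outright, which requires quantitative control on $(1 + 1/r)^{n_{r + 1}}$, and hence on how rapidly $n_{r + 1}$ may grow with $r$. The exponent-gap hypothesis $n_{r + 1} - n_{r} \in [0, 1]$, meanwhile, is equivalent to the bound
\[
\lfloor \log_{r + 1} W(r + 1, k) \rfloor \leq \lfloor \log_{r} W(r, k) \rfloor + 1,
\]
which encodes fine information about how the radix-length of $W(r + 1, k)$ in base $r + 1$ may exceed that of $W(r, k)$ in base $r$. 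Establishing this rigorously would, in effect, furnish the kind of size estimate on $W(r + 1, k)$ one was hoping to obtain in the first place, and closing this loop is the principal difficulty I would expect to face when attempting to turn the conjecture into a theorem.
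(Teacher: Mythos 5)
Your derivation matches exactly what the paper does: the statement is posed as a conjecture, obtained by substituting the standing hypothesis \(n_{r+1} - n_{r} \in [0, 1]\) (hence \(r^{n_{r+1} - n_{r}} \leq r\)) into Conjecture 5.2, and the paper offers no proof beyond this conditional restatement. You correctly identify that the substance lies in the two unproved hypotheses --- Conjecture 5.2 itself and the exponent-gap bound --- so your account is faithful to the paper and, if anything, more candid about where the actual difficulty sits.
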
 
Furthermore the two recurrences we have derived in Section Three and in Section Four when $k$ and $r$ are large, respectively, namely,
\begin{eqnarray}
W(r, k + 1)&<&(k + 1)k^{m_{k + 1} - m_{k}}W(r, k)(1 + o(1)),\\
W(r + 1, k)&<&(r + 1)r^{n_{r + 1} - n_{r}}W(r, k)(1 + o(1)),
\end{eqnarray}
are not in the least insignificant. They indicate how the two van der Waerden numbers $W(r, k + 1)$, $W(r + 1, k)$ even when these are unknown, are bounded on the real line, since
\begin{eqnarray}
(k + 1)^{m_{k + 1}}&\leq&W(r, k + 1) < (k + 1)k^{m_{k + 1} - m_{k}}W(r, k)(1 + o(1)),\\
(r + 1)^{n_{r + 1}}&\leq&W(r + 1, k) < (r + 1)r^{n_{r + 1} - n_{r}}W(r, k)(1 + o(1)).
\end{eqnarray}
Such facts, which we choose to call information or facts we have \emph{a priori} about the two unknown van der Waerden numbers $W(r, k + 1)$, $W(r + 1, k)$, cannot be insignificant since for large $k$ and $r$ respectively and from the two inequalities in Eqns. (75)--(76) we can derive also the two inequalities
\begin{eqnarray}
\log_{k}W(r, k + 1) - m_{k + 1}&\leq&\log_{k}((k + 1)W(r, k)) - m_{k},\\
\log_{r}W(r + 1, k) - n_{r + 1}&\leq&\log_{r}((r + 1)W(r, k)) - n_{r}.
\end{eqnarray}
This means in the first case for $k$ large enough (i.e., so that $\left(1 + \frac{1}{k}\right)^{m_{k + 1}}$ is $1 + o(1)$), by computer and even through trial and error if necessary, we either can find estimates or approximations for the unknown values $\log_{k}W(r, k + 1)$ and $m_{k + 1}$ by bounding each of our estimated or approximated differences for $\log_{k}W(r, k + 1) - m_{k + 1}$ above by 
\begin{equation}
\log_{k}((k + 1)W(r, k)) - m_{k}.
\end{equation}
In the second case for $r$ large enough (i.e., so that $\left(1 + \frac{1}{r}\right)^{n_{r + 1}}$ is $1 + o(1)$), we also through trial and error if necessary, either can find estimates or approximations for the unknown values $\log_{r}W(r + 1, k)$ and $n_{r + 1}$ by bounding each of the estimated or approximated differences for $\log_{r}W(r + 1, k) - n_{r + 1}$ above by 
\begin{equation}
\log_{r}((r + 1)W(r, k)) - n_{r}.
\end{equation} 
Our approach also poses a question we feel very strongly is important in reference to van der Waerden numbers: What are the asymptotic behaviors of the two differences 
\begin{equation}
m_{k + 1} - m_{k},
\end{equation}
and
\begin{equation}
n_{r + 1} - n_{r}?
\end{equation}
Thus if one insists still that all this is insignificant, certainly when it comes to finding through numerical methods and algorithms the estimates or approximations for the unknown numbers $W(r, k + 1)$, $m_{k + 1}$, and $W(r + 1, k)$, $n_{r + 1}$ whenever $W(r, k)$, $r$, $k$, $m_{k}$, $n_{r}$ already are 
known, then one only can ask as did Pontius Pilate, \emph{Quid est veritas}?

\pagebreak

\end{document}